\documentclass[11pt]{article}
\usepackage{latexsym,amssymb,float,amsmath,amsthm,enumerate,cite,geometry,tikz}
\geometry{a4paper,left=2cm,right=2cm, top=2cm, bottom=2.2cm}
\newtheorem{theorem}{Theorem}

\usepackage{setspace}
\allowdisplaybreaks

\usetikzlibrary{decorations.text}
\usetikzlibrary{decorations.pathmorphing}
\usepackage{pgfplots}
%\pgfplotsset{compat=1.18}

\begin{document}
%\linenumbers
\onehalfspace
\title{Degree Deviation and Spectral Radius}
\author{Dieter Rautenbach \and Florian Werner}
\date{}
\maketitle
\vspace{-1cm}
\begin{center}
Institute of Optimization and Operations Research, Ulm University, Ulm, Germany\\
\texttt{$\{$dieter.rautenbach,florian.werner$\}$@uni-ulm.de}
\end{center}
\begin{abstract}
For a finite, simple, and undirected graph $G$ 
with $n$ vertices, $m$ edges, and largest eigenvalue $\lambda$,
Nikiforov introduced the degree deviation of $G$ as
$s=\sum_{u\in V(G)}\left|d_G(u)-\frac{2m}{n}\right|$.
Contributing to a conjecture of Nikiforov, we show
$\lambda-\frac{2m}{n}\leq \sqrt{\frac{2s}{3}}$.
For our result, 
we show that the largest eigenvalue of a graph 
that arises from a bipartite graph with $m_{A,B}$ edges
by adding $m_A$ edges within one of the two partite sets is at most 
$\sqrt{m_A+m_{A,B}+\sqrt{m_A^2+2m_Am_{A,B}}}$,
which is a common generalization of results 
due to Stanley and Bhattacharya, Friedland, and Peled.\\[3mm]
{\bf Keywords}: degree deviation; spectral radius
\end{abstract}

\section{Introduction}\label{sec1}

We consider finite, simple, and undirected graphs and use standard notation and terminology.
For a graph $G$ with $n$ vertices and $m$ edges,
Nikiforov \cite{ni} introduced the {\it degree deviation} $s(G)$ of $G$ as
$s(G)=\sum\limits_{u\in V(G)}\left|d_G(u)-\frac{2m}{n}\right|$.
For the {\it spectral radius} $\lambda(G)$ of $G$,
which is the largest eigenvalue of the adjacency matrix of $G$,
he showed that
$\lambda(G)-\frac{2m}{n}\leq \sqrt{s(G)}$
and conjectured 
$\lambda(G)-\frac{2m}{n}\leq \sqrt{\frac{s(G)}{2}}$
for sufficiently large $n$ and $m$.
Zhang \cite{zh} showed 
$\lambda(G)-\frac{2m}{n}\leq \sqrt{\frac{9s(G)}{10}}$.

We make further progress on Nikiforov's conjecture by showing the following.

\begin{theorem}\label{theorem1}
If $G$ is a graph with $n$ vertices and $m$ edges, then 
$$\lambda(G)-\frac{2m}{n}\leq \sqrt{\frac{2s(G)}{3}}.$$
\end{theorem}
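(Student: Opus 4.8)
The plan is to combine the Rayleigh characterization of $\lambda(G)$ with the auxiliary spectral bound for bipartite-plus-edges graphs stated above, arranging things so that the total number of relevant edges is governed by $s(G)$. Write $\bar d=\frac{2m}{n}$ and let $\mathbf x\ge 0$ be a unit Perron eigenvector of the adjacency matrix $A_G$ of $G$. I would start from the identity $\lambda(G)-\bar d=\mathbf x^\top(A_G-\bar d I)\mathbf x=\sum_{v\in V(G)}(d_G(v)-\bar d)x_v^2-\sum_{uv\in E(G)}(x_u-x_v)^2$, discard the nonnegative Laplacian-type sum, and drop the (nonpositive) contributions of the below-average vertices. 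Setting $A=\{v:d_G(v)\ge\bar d\}$ this yields the clean estimate $\lambda(G)-\bar d\le\sum_{v\in A}(d_G(v)-\bar d)x_v^2$. The crucial feature is that these weights sum to exactly half the degree deviation, since $\sum_{v\in V(G)}(d_G(v)-\bar d)=0$ forces $\sum_{v\in A}(d_G(v)-\bar d)=\tfrac12 s(G)$.

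Next I would realize the right-hand side as a lower bound for the Rayleigh quotient at $\mathbf x$ of a loopless graph $H$ on $V(G)$ of exactly the type the auxiliary bound addresses: every edge of $H$ lies either inside $A$ (say $m_A$ of them) or between $A$ and $B=V(G)\setminus A$ ($m_{A,B}$ of them), with no edge inside $B$, and the construction is arranged so that $\mathbf x^\top A_H\mathbf x\ge\sum_{v\in A}(d_G(v)-\bar d)x_v^2$. Then $\lambda(G)-\bar d\le\mathbf x^\top A_H\mathbf x\le\lambda(H)$, and the auxiliary bound gives $\lambda(H)^2\le m_A+m_{A,B}+\sqrt{m_A^2+2m_Am_{A,B}}$. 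Because $H$ has no edges inside $B$, its edge data satisfies $2m_A+m_{A,B}=\sum_{v\in A}d_H(v)=\tfrac12 s(G)=:T$, so it remains only to maximize $f(m_A,m_{A,B})=m_A+m_{A,B}+\sqrt{m_A^2+2m_Am_{A,B}}$ subject to $2m_A+m_{A,B}=T$.

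This optimization is routine: substituting $m_{A,B}=T-2m_A$ gives $f=T-m_A+\sqrt{2Tm_A-3m_A^2}$, whose maximum over $0\le m_A\le T/2$ is attained at $m_A=T/6$, $m_{A,B}=2T/3$ and equals $\tfrac{4}{3}T$. Hence $(\lambda(G)-\bar d)^2\le\lambda(H)^2\le\tfrac43\cdot\tfrac12 s(G)=\tfrac{2}{3}s(G)$, which is precisely Theorem~\ref{theorem1}. It is worth noting that the constant $\tfrac23$ is produced exactly by this extremal split between edges inside $A$ and edges across the cut, so the optimization is the source of the improvement rather than incidental bookkeeping.

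The main obstacle is the middle step: passing from the diagonal weighting $\sum_{v\in A}(d_G(v)-\bar d)x_v^2$ to an honest loopless graph $H$ of the required bipartite-plus-edges type whose Perron Rayleigh quotient dominates it. Two points need care. First, the excess degrees $d_G(v)-\bar d$ need not be integers, so $H$ must be built by an approximation (or weighting) argument, and one must check that the auxiliary bound persists in the weighted setting. Second, and more seriously, one must guarantee that the target graph genuinely has no edges inside $B$: the edges of $G$ inside $B$ are exactly what obstruct a direct application to $G$ itself, and disposing of them — presumably by a transfer/compression argument that moves weight onto the cut without decreasing the relevant quadratic form and without pushing $2m_A+m_{A,B}$ above $\tfrac12 s(G)$ — is where the real work lies. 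I expect the verification that this reduction keeps the total edge contribution pinned at $\tfrac12 s(G)$, rather than at $\sum_{v\in A}d_G(v)$, to be decisive, since it is precisely the subtraction of $\bar d$ that separates the sharp constant from the trivial one.
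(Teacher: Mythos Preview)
Your proposal has a genuine gap that is more serious than the technical obstacle you flag. Once you drop the Laplacian term and the below-average contributions to reach
\[
\lambda(G)-\bar d \;\le\; \sum_{v\in A}\bigl(d_G(v)-\bar d\bigr)x_v^2,
\]
the right-hand side can already be far larger than $\sqrt{2s/3}$, so no graph $H$ with $\lambda(H)\le\sqrt{2s/3}$ can satisfy $\mathbf x^\top A_H\mathbf x\ge\sum_{v\in A}(d_G(v)-\bar d)x_v^2$; the chain $\sum_{v\in A}(d_G(v)-\bar d)x_v^2\le \mathbf x^\top A_H\mathbf x\le\lambda(H)\le\sqrt{2s/3}$ is impossible in general. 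Concretely, take the complete split graph $CS(q,n)$ with $q$ fixed and $n\to\infty$. Here $A$ consists of the $q$ universal vertices, and one computes $d_G(v)-\bar d=\frac{(n-q)(n-q-1)}{n}\sim n$ for $v\in A$, while the Perron entries satisfy $x_v^2\sim \frac{1}{2q}$ on $A$; hence $\sum_{v\in A}(d_G(v)-\bar d)x_v^2\sim \frac{n}{2}$. On the other hand $s\sim 2qn$, so $\sqrt{2s/3}\sim\sqrt{4qn/3}=\Theta(\sqrt n)$. The discarded term $\sum_{uv\in E(G)}(x_u-x_v)^2$ is also of order $n/2$ in this example, and throwing it away is exactly what breaks the argument. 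Thus the ``middle step'' is not a matter of constructing the right $H$; the estimate preceding it is already too weak.

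The paper's proof proceeds quite differently. It never passes through the Rayleigh identity. Instead it carves out an explicit edge set $E_A\cup E_{A,B}$ from $G$ so that the remaining graph $G'$ has maximum degree at most $\lceil\bar d\rceil+1$, while the removed edges form a subgraph $G''$ of the bipartite-plus-edges type with $2m_A+m_{A,B}\le s/2$ (this inequality holds because the edges in question are literal edges of $G$ incident with the high-degree set). Then the subadditivity $\lambda(G)\le\lambda(G')+\lambda(G'')$ together with the auxiliary bound gives $\lambda(G)\le \lceil\bar d\rceil+1+\sqrt{2s/3}$, and a final blow-up argument removes the additive constant. The optimization you carry out in your last paragraph is the same one the paper performs, but it is applied to $\lambda(G'')$, not to a putative majorant of the diagonal form.
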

For the proof of Theorem \ref{theorem1}, 
we establish a new bound on the spectral radius of a graph,
which is a common generalization of results 
due to Stanley \cite{st} and Bhattacharya et al.~\cite{bhfrpe}.
For a graph $G$ with $n$ vertices and $m$ edges,
Stanley \cite{st} showed $\lambda(G)\leq \sqrt{2m}$;
in fact, he showed a slightly stronger bound.
Provided that $G$ is bipartite,
Bhattacharya et al.~\cite{bhfrpe} showed $\lambda(G)\leq \sqrt{m}$,
which had been shown before by Nosal \cite{no} for triangle-free graphs.

\begin{theorem}\label{theorem2}
Let $G$ be a graph whose vertex set is partitioned into the two sets $A$ and $B$.
If the edge set of $G$ consist of 
$m_A$ edges with both endpoints in $A$ and 
$m_{A,B}$ edges with one endpoint in $A$ and one endpoint in $B$,
then 
$$\lambda(G)\leq \sqrt{m_A+m_{A,B}+\sqrt{m_A^2+2m_Am_{A,B}}}.$$
\end{theorem}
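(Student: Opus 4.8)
The plan is to exploit the block structure of the adjacency matrix induced by the partition $\{A,B\}$, use a Perron eigenvector to eliminate the $B$-part, and thereby reduce the claim to a scalar quadratic inequality in $\lambda=\lambda(G)$. Order the vertices so that those of $A$ come first and write the adjacency matrix of $G$ as
$$M=\begin{pmatrix} A_1 & C\\ C^{\top} & 0\end{pmatrix},$$
where $A_1$ is the adjacency matrix of $G[A]$, the matrix $C$ is the biadjacency matrix of the bipartite subgraph between $A$ and $B$, and the bottom-right block vanishes because $G$ has no edges within $B$. Here $A_1$ has exactly $2m_A$ nonzero entries and $C$ has exactly $m_{A,B}$ nonzero entries. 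If $\lambda=0$ the inequality is trivial, so assume $\lambda>0$, and let $\binom{\mathbf a}{\mathbf b}\ge 0$ be a Perron eigenvector, with $\mathbf a$ supported on $A$ and $\mathbf b$ on $B$. The second block row reads $C^{\top}\mathbf a=\lambda\mathbf b$, so $\mathbf b=\lambda^{-1}C^{\top}\mathbf a$; in particular $\mathbf a\neq 0$ (otherwise $\mathbf b=0$ too), so we may normalize $\|\mathbf a\|=1$.

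Substituting $\mathbf b=\lambda^{-1}C^{\top}\mathbf a$ into the first block row $A_1\mathbf a+C\mathbf b=\lambda\mathbf a$ and multiplying by $\lambda$ gives $\lambda^2\mathbf a=\lambda A_1\mathbf a+CC^{\top}\mathbf a$. Taking the inner product with $\mathbf a$ yields the key identity
$$\lambda^2=\lambda\,\mathbf a^{\top}A_1\mathbf a+\mathbf a^{\top}CC^{\top}\mathbf a .$$
I would then bound the two terms separately. For the first, $\mathbf a^{\top}A_1\mathbf a\le\lambda_{\max}(A_1)=\lambda(G[A])\le\sqrt{2m_A}$ by Stanley \cite{st} applied to $G[A]$ (equivalently $\lambda_{\max}(A_1)^2\le\operatorname{tr}(A_1^2)=2m_A$). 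For the second, $\mathbf a^{\top}CC^{\top}\mathbf a=\|C^{\top}\mathbf a\|^2\le\sigma_1(C)^2$, the square of the largest singular value of $C$; since $\sigma_1(C)$ is the spectral radius of the bipartite graph with $m_{A,B}$ edges encoded by $C$, the bound of Bhattacharya et al.~\cite{bhfrpe} gives $\sigma_1(C)\le\sqrt{m_{A,B}}$ (equivalently $\sigma_1(C)^2\le\operatorname{tr}(CC^{\top})=m_{A,B}$). Combining,
$$\lambda^2\le\sqrt{2m_A}\,\lambda+m_{A,B}.$$

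To finish, rearrange to $\lambda^2-m_{A,B}\le\sqrt{2m_A}\,\lambda$. If $\lambda^2\le m_{A,B}$ the conclusion is immediate; otherwise both sides are nonnegative and squaring gives $\lambda^4-2(m_A+m_{A,B})\lambda^2+m_{A,B}^2\le 0$. Reading this as a quadratic in $\lambda^2$ and taking the larger root produces $\lambda^2\le m_A+m_{A,B}+\sqrt{(m_A+m_{A,B})^2-m_{A,B}^2}=m_A+m_{A,B}+\sqrt{m_A^2+2m_Am_{A,B}}$, which is exactly the claimed bound. I expect the only genuine difficulty to be spotting the reduction that eliminates $\mathbf b$ and isolates the identity $\lambda^2=\lambda\,\mathbf a^{\top}A_1\mathbf a+\mathbf a^{\top}CC^{\top}\mathbf a$; once it is in hand, the two ingredients plug in directly and the quadratic closes the argument. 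As a sanity check, the final bound specializes to $2m_A$ when $m_{A,B}=0$ (recovering Stanley) and to $m_{A,B}$ when $m_A=0$ (recovering Bhattacharya et al.), which confirms the correct interpolation between the two results.
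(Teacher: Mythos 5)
Your proof is correct, but it takes a genuinely different route from the paper's. The paper argues entrywise with an eigenvector normalized to have largest entry $1$: it applies the eigenvalue equation twice at the vertex carrying that entry (a device attributed to Favaron, Mah\'eo, and Sacl\'e), splits into cases according to whether the maximum is attained in $A$ or in $B$, and, writing $\alpha=\max\{x_u:u\in B\}$, derives the two bounds $\lambda^2\le 2m_A+(1+\alpha)m_{A,B}$ and $\lambda^2\le \frac{2}{\alpha}m_A+m_{A,B}$ by charging each edge of $G$ its possible contribution; balancing these over $\alpha$ gives the stated expression. You instead exploit the block structure of the adjacency matrix, eliminate the $B$-coordinates via $\mathbf b=\lambda^{-1}C^{\top}\mathbf a$, and reduce everything to the identity $\lambda^2=\lambda\,\mathbf a^{\top}A_1\mathbf a+\lVert C^{\top}\mathbf a\rVert^2$ for a unit vector $\mathbf a$; the Rayleigh-quotient bound $\mathbf a^{\top}A_1\mathbf a\le\lambda_{\max}(A_1)\le\sqrt{\operatorname{tr}(A_1^2)}=\sqrt{2m_A}$ and the singular-value bound $\lVert C^{\top}\mathbf a\rVert^2\le\operatorname{tr}(CC^{\top})=m_{A,B}$ then yield $\lambda^2\le\sqrt{2m_A}\,\lambda+m_{A,B}$, whose larger root is exactly the claimed quantity. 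Your handling of the edge cases ($\lambda=0$, $\mathbf a\neq 0$, the sign check before squaring) is sound, and in fact your estimates do not even require the nonnegativity of the eigenvector. Your argument is shorter and makes the interpolation between Stanley's bound and that of Bhattacharya, Friedland, and Peled completely transparent, since both enter as trace inequalities; the paper's entrywise argument is more laborious but keeps explicit track of where slack occurs (the authors note a ``tiny room for improvement'' in their inequality (\ref{e5})), which is relevant to their ultimate goal of improving the constant in Nikiforov's conjecture.
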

Complete split graphs $CS(q,n)$ with $q$ universal vertices and $n-q$ vertices of degree $q$ 
show that Theorem \ref{theorem2} is essentially best possible.
In fact, it is known \cite{hoshfa} that the spectral radius of 
$CS(q,n)$ is $\frac{1}{2}\left(q-1+\sqrt{(4n-2)q-3q^3+1}\right)$,
which asymptotically coincides with the bound in Theorem \ref{theorem2}
for $m_A={q\choose 2}$ and $m_{A,B}=q(n-q)$.

The next section contains the proofs of both results and some discussion.

\section{Proofs}

Since Theorem \ref{theorem1} relies on Theorem \ref{theorem2},
we start with the latter.

\begin{proof}[Proof of Theorem \ref{theorem2}]
For $m_{A,B}=0$, Stanley's result implies the desired bound.
Hence, we may assume that $m_{A,B}>0$, which implies $\lambda=\lambda(G)>0$.
Let $x=(x_u)_{u\in V(G)}$ be an eigenvector for the eigenvalue $\lambda$.
For every vertex $u$ of $G$, 
we have $\lambda x_u=\sum\limits_{v:v\in N_G(u)}x_v$
and applying this identity twice, we obtain
\begin{eqnarray}\label{e1}
\lambda^2 x_u
&=&\sum\limits_{v:v\in N_G(u)}\lambda x_v\nonumber
=\sum\limits_{v:v\in N_G(u)}\left(\sum\limits_{w:w\in N_G(v)}x_w\right)\nonumber\\
&=& x_ud_G(u)+\sum\limits_{v:v\in N_G(u)}\left(\sum\limits_{w:w\in N_G(v)\setminus \{ u\}}x_w\right);
\end{eqnarray}
this observation seems to originate from Favaron et al.~\cite{famasa}.

By the Perron-Frobenius Theorem and by normalizing the eigenvector $x$, 
we may assume that $x$ has no negative entry and that $\max\{ x_u:u\in V(G)\}=1$.
%alternatively consider the Rayleigh quotient $x^TAx/x^Tx$ can only increase by replacing every $x_u$ by $|x_u|$.
Let the vertex $u'$ be such that $x_{u'}=1$ and let $\alpha=\max\{ x_u:u\in B\}$.

If $\alpha=1$, then we may assume $u'\in B$ and 
applying (\ref{e1}) with $u=u'$ implies
\begin{eqnarray}
\lambda^2
&=& d_G(u')+\underbrace{\sum\limits_{v:v\in N_G(u')}\left(\sum\limits_{w:w\in N_G(v)\setminus \{ u'\}}x_w\right)}_{(*)}\nonumber\\
& \leq & d_G(u')+2m_A+(m_{A,B}-d_G(u'))\label{e2}\\
& = & 2m_A+m_{A,B},\label{e3}
\end{eqnarray}
where (\ref{e2}) follows because 
each of the $m_A$ edges $vw$ with $v,w\in A$ 
contributes at most $x_v+x_w\leq 2$ to $(*)$
and each of the $m_{A,B}-d_G(u')$ edges $vw$ with $v\in A$ and $w\in B\setminus \{ u'\}$ 
contributes at most $x_w\leq \alpha=1$ to $(*)$.

See Figure \ref{fig1} for an illustration.

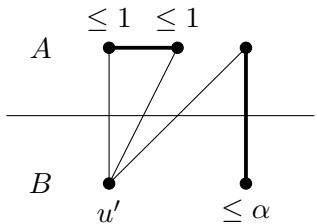
\begin{figure}[H]
   \centering
   \begin{tikzpicture} [scale=0.9]
       \tikzstyle{point}=[draw,circle,inner sep=0.cm, minimum size=1.5mm, fill=black]
       \tikzstyle{line1}=[line width=0.5mm]
       \tikzstyle{line2}=[thin]    

       \coordinate (a1) at (0,0) [label=left:] {};
       \coordinate (a2) at (1,0) [label=left:] {};
       \coordinate (a3) at (2,0) [label=left:] {};

       \node[point] at (0,0) [label=above:$\le 1$] {};
       \node[point] at (1,0) [label=above:$\le 1$] {};
       \node[point] at (2,0) [label=above:] {};
   
       \draw (-1.5,-1) -- (3,-1);

       \coordinate (b1) at (0,-2) [label=left:] {};
       \coordinate (b2) at (2,-2) [label=left:] {};

       \node[point] at (0,-2) [label=below:$u'$] {};
       \node[point] at (2,-2) [label=below:$\le \alpha$] {};

       \draw[line2] (b1) -- (a1);
       \draw[line2] (b1) -- (a2);
       \draw[line2] (b1) -- (a3);

       \draw[line1] (b2) -- (a3);
       \draw[line1] (a1) -- (a2);

       \node[align=center] at (-1,0) {$A$};
       \node[align=center] at (-1,-2) {$B$};

   \end{tikzpicture}
 \caption{Two edges incident with neighbors of $u'$ and their possible contributions to $(*)$.
 If one of the thin edges does not belong to $G$, the contribution is reduced accordingly.}
   \label{fig1}
\end{figure}

Since (\ref{e3}) is stronger than the stated bound, the proof is complete in this case.
Hence, we may assume that $\alpha<1$, which implies that $u'\in A$.

Let $u'$ have $d_A$ neighbors in $A$ and $d_{A,B}$ neighbors in $B$.
Applying (\ref{e1}) with $u=u'$ implies 
\begin{eqnarray}
\lambda^2
&=& d_G(u')+\underbrace{\sum\limits_{v:v\in N_G(u')}\left(\sum\limits_{w:w\in N_G(v)\setminus \{ u'\}}x_w\right)}_{(**)}\nonumber\\
& \leq & (d_A+d_{A,B})+2(m_A-d_A)+(1+\alpha)(m_{A,B}-d_{A,B})\label{e4}\\
& \leq & 2m_A+(1+\alpha)m_{A,B},\label{e5}
\end{eqnarray}
where (\ref{e4}) follows because 
each of the $m_A-d_A$ edges $vw$ with $v,w\in A\setminus \{ u'\}$ 
contributes at most $x_v+x_w\leq 2$ to $(**)$
and each of the $m_{A,B}-d_{A,B}$ edges $vw$ with $v\in A\setminus \{ u'\}$ and $w\in B$ 
contributes at most $x_v+x_w\leq 1+\alpha$ to $(**)$;
recall that $x$ has no negative entry, which implies $1\leq 1+\alpha$.

See Figure \ref{fig2} for an illustration.\\[-12mm]

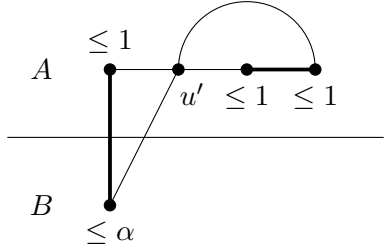
\begin{figure}[H]
   \centering
   \begin{tikzpicture} [scale=0.9]
       \tikzstyle{point}=[draw,circle,inner sep=0.cm, minimum size=1.5mm, fill=black]
       \tikzstyle{line1}=[line width=0.5mm]
       \tikzstyle{line2}=[thin]    

       \coordinate (a1) at (0,0) [label=left:] {};
       \coordinate (a2) at (1,0) [label=left:] {};
       \coordinate (a3) at (2,0) [label=left:] {};
       \coordinate (a4) at (3,0) [label=left:] {};

       \node[point] at (0,0) [label=above:$\le 1$] {};
       \node[point] at (1,0) [label=below right:] {};
       \node at (1.2,0.1) [label=below:$u'$] {};
       \node[point] at (2,0) [label=below:$\le 1$] {};
       \node[point] at (3,0) [label=below:$\le 1$] {};
   
       \draw (-1.5,-1) -- (4,-1);

       \coordinate (b1) at (0,-2) [label=left:] {};

       \node[point] at (0,-2) [label=below:$\le \alpha$] {};

       \draw[line2] (a1) -- (a2);
       \draw[line2] (a2) -- (a3);
       \draw[line2] (b1) -- (a2);
       \draw[line2] (a2) arc(180:0:1);

       \draw[line1] (b1) -- (a1);
       \draw[line1] (a3) -- (a4);

       \node[align=center] at (-1,0) {$A$};
       \node[align=center] at (-1,-2) {$B$};

   \end{tikzpicture}
   \caption{Two edges incident with neighbors of $u'$ and their possible contributions to $(**)$.}
   \label{fig2}
\end{figure}

If $\alpha=0$, then (\ref{e5}) is stronger than the stated bound
and the proof is complete in this case.
Hence, we may assume that $\alpha>0$.
Note that the inequality (\ref{e5}) is strict if $d_A>0$ or $\alpha d_{A,B}>0$,
that is, there is a tiny room for improvement.

Let $u''\in B$ be such that $x_{u''}=\alpha$.
Applying (\ref{e1}) with $u=u''$ implies 
\begin{eqnarray}
\lambda^2\alpha
&=& \alpha d_G(u'')+\sum\limits_{v:v\in N_G(u'')}\left(\sum\limits_{w:w\in N_G(v)\setminus \{ u''\}}x_w\right)\nonumber\\
& \leq & \alpha d_G(u'')+2m_A+\alpha(m_{A,B}-d_G(u''))\label{e6}\\
& = & 2m_A+\alpha m_{A,B},\label{e7}
\end{eqnarray}
where (\ref{e6}) follows similary as (\ref{e2}).
Since $\alpha>0$, the bound (\ref{e7}) implies 
\begin{eqnarray}
\lambda^2 & \leq & \frac{2}{\alpha}m_A+m_{A,B}.\label{e8}
\end{eqnarray}
Since the bound in (\ref{e5}) is increasing in $\alpha$ and 
the bound in (\ref{e8}) is decreasing in $\alpha$, 
we obtain that 
$\lambda^2\leq 2m_A+(1+\alpha^*)m_{A,B}$,
where $\alpha^*$ is chosen such that 
$2m_A+(1+\alpha^*)m_{A,B}=\frac{2}{\alpha^*}m_A+m_{A,B}$.
Solving this equation for $\alpha^*$ yields 
$\alpha^*=\sqrt{\left(\frac{m_A}{m_{A,B}}\right)^2+2\frac{m_A}{m_{A,B}}}-\frac{m_A}{m_{A,B}}\in [0,1]$.
Substituting this value in $\lambda^2\leq 2m_A+(1+\alpha^*)m_{A,B}$ yields
$$\lambda^2\leq m_A+m_{A,B}+\sqrt{m_A^2+2m_Am_{A,B}},$$ 
which completes the proof.
\end{proof}

\begin{proof}[Proof of Theorem \ref{theorem1}]
Let $\lambda=\lambda(G)$, $s=s(G)$, $d=\left\lceil\frac{2m}{n}\right\rceil$,
and $C=\{ u\in V(G):d_G(u)\geq d+1\}$.

We choose a set $E_0$ of edges of $G$ with both endpoints in $C$ such that
\begin{enumerate}[(i)]
\item $d_H(u)\geq d$ for every vertex $u$ in $C$ and the graph $H=G-E_0=(V(G),E(G)\setminus E_0)$,
\item subject to condition (i), the number $m_0=|E_0|$ of edges in $E_0$ is as large as possible, and
\item subject to conditions (i) and (ii), the expression 
$$\sum\limits_{u\in C}\max\{ d_H(u)-(d+1),0\}$$ is as small as possible.
\end{enumerate}
Let $C'=\{ u\in C:d_H(u)=d\}$.
Let $C''$ be the set of isolated vertices of the graph $\left(C',E_0\cap {C'\choose 2}\right)$.
Let $A=C\setminus C''$ and $B=V(G)\setminus A$.

See Figure \ref{fig3} for an illustration.

\begin{figure}[H]
   \centering
   \begin{tikzpicture} [scale=0.8]
       \tikzstyle{point}=[draw,circle,inner sep=0.cm, minimum size=1.5mm, fill=black]
       \tikzstyle{line1}=[line width=0.4mm]
       \tikzstyle{line2}=[line width=0.4mm]    

       \coordinate (x1) at (1,2) [label=left:] {};
       \coordinate (x2) at (2,2) [label=left:] {};
       \coordinate (x3) at (4,2) [label=left:] {};

       \foreach \i in {1,2,4}{
           \node[point] at (\i,2) [label=below:] {};
       }

       \coordinate (a1) at (1,0) [label=left:] {};
       \coordinate (a2) at (3,0) [label=left:] {};
       \coordinate (a3) at (6,0) [label=left:] {};

       \node[point] at (1,0) [label=above:] {};
       \node[point] at (3,0) [label=below right:] {};
       \node[point] at (6,0) [label=below:] {};
   
       \draw (-1,-1) -- (7,-1);
       \draw (-1,1) -- (7,1);
       \draw (-1,-3) -- (7,-3);
       \draw (7,3) -- (7,-3);
       \draw (-1,-3) -- (-1,3);
       \draw (3,-1) -- (3,-3);

       \coordinate (b1) at (0,-2) [label=left:] {};
       \coordinate (b2) at (1,-2) [label=left:] {};
       \coordinate (b3) at (2,-2) [label=left:] {};
       \coordinate (b4) at (4,-2) [label=left:] {};
       \coordinate (b5) at (5,-2) [label=left:] {};
       \coordinate (b6) at (6,-2) [label=left:] {};

    \foreach \i in {0,1,2,4,5,6}{
           \node[point] at (\i,-2) [label=below:] {};
       }

       \draw[line1] (b1) -- (a1);
       \draw[line1] (b2) -- (a1);
       \draw[line1] (b3) -- (a1);
       \draw[line1] (b3) -- (a2);
       \draw[line1] (b4)-- (b5) -- (b6) -- (a3);
       
       \draw[line1] (a1) -- (x1) -- (x2) -- (a2) -- (x3) -- (b4);

       \node[align=center] at (8,-2) {$d$};
       \node[align=center] at (8,0) {$d+1$};
       \node[align=center] at (8,2) {$\ge d+2$};
       \node[align=center] at (2,-4.4){$B=(V(G) \setminus C) \cup C''$};
       \node[align=center] at (1.0,2.7) {$A=C\setminus C''$};
       \node[align=center] at (0,-2.5) {${C''}$};

       \draw[line2] (-1.15,-5) -- (-1.15,-1.15) -- (2.85,-1.15) -- (2.85, -3.15) -- (7,-3.15) -- (7,-5);
        \draw[thick,black,decorate,decoration={brace,amplitude=5}] (9,3) -- (9,-3) node[midway, below,yshift=0]{};    
       \node[align=left] at (13,0) {
The vertices in the set $C$\\
grouped according their \\
degrees in $H=G-E_0$};
    
       \draw[line1] (-1,-1) -- (3,-1) -- (3,-3) -- (-1,-3) -- (-1,-1) -- (3,-1);
    
   \end{tikzpicture}
   \caption{The partition of the vertex set of $G$ into $A$ and $B$. The edges shown within $C$ are the edges in $E_0$
   that are removed from $G$ to obtain $H$.
For the vertices in $C$, we consider their degrees $d$, $d+1$, and $\geq d+2$ in $H$.}
   \label{fig3}
\end{figure}
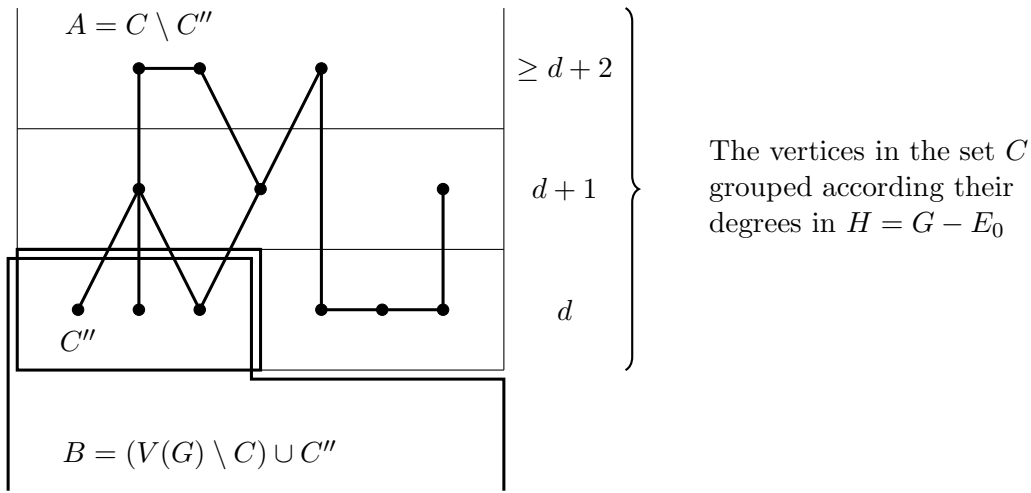

By (ii) in the choice of $E_0$, the set $C\setminus C'=\{ u\in C:d_H(u)\geq d+1\}$ is independent in $H$.
If $uv\in E(H)$ with $d_H(u)\geq d+2$ and $v\in C'\setminus C''$,
then $E_0$ contains an edge $vw$ with $w\in C'\setminus C''$ and
$E_0'=(E_0\setminus \{ vw\})\cup \{ uv\}$ yields a contradiction to the condition (iii) in the choice of $E_0$.
Hence, in the graph $H$, the vertices in $\{ u\in C:d_H(u)\geq d+2\}$ have all their neighbors in $B$.
Let $E_A=E_0\cap {A\choose 2}$.
Note that all edges in $E_0\setminus E_A$ are between $A$ and $B$.
Let $E_{A,B}$ arise from $E_0\setminus E_A$ by adding, for every vertex $u\in C$ with $d_H(u)\geq d+2$,
exactly $d_H(u)-(d+1)$ edges incident with $u$.
By construction,
all edges in $E_A$ have both their endpoints in $A\subseteq C$ and
every edge in $E_{A,B}$ connects a vertex from $A$ to a vertex from $B$.
Furthermore, the graph $G'=G-(E_A\cup E_{A,B})$ has maximum degree at most $d+1$
and $d_{G'}(u)\geq d$ holds for every vertex $u\in A$.
Let $m_A=|E_A|$, $m_{A,B}=|E_{A,B}|$, and $G''=(V(G),E_A\cup E_{A,B})$.

Since $\sum\limits_{u\in V(G)}\left(d_G(u)-\frac{2m}{n}\right)=0,$ we have 
\begin{eqnarray}
2m_A+m_{A,B}
\leq \sum\limits_{u\in A}(d_G(u)-d)
\leq \sum\limits_{u\in C}(d_G(u)-d)
\leq \sum\limits_{u\in C}\left(d_G(u)-\frac{2m}{n}\right)
\leq \frac{s}{2}.\label{e9}
\end{eqnarray}
Since $G$ is the edge-disjoint union of the graphs $G'$ and $G''$, 
we obtain using the maximum degree bound for $G'$ and
Theorem \ref{theorem2} for $G''$ that

\begin{eqnarray}
\lambda & \leq & \lambda(G')+\lambda(G'')\label{e9b}\\
& \leq & d+1+\sqrt{m_A+m_{A,B}+\sqrt{m_A^2+2m_Am_{A,B}}}.\label{e10}
\end{eqnarray}
Since (\ref{e10}) is increasing in $m_{A,B}$, it follows using (\ref{e9}) that
\begin{eqnarray}
\lambda 
& \leq & 
d+1+\max\left\{ \sqrt{x+y+\sqrt{x^2+2xy}}:x,y\geq 0\mbox{ and }2x+y=\frac{s}{2}\right\}\nonumber\\
& \leq & 
d+1+\max\left\{ \sqrt{\frac{s}{2}-x+\sqrt{x(s-3x)}}:0\leq x\leq \frac{s}{4}\right\}.\label{e11}
\end{eqnarray}
A simple calculation shows that $x=\frac{s}{12}$ solves the maximization problem in (\ref{e11}) and we obtain
\begin{eqnarray*}
\lambda 
& \leq & 
d+1+\sqrt{\frac{s}{2}-\frac{s}{12}+\sqrt{\frac{s}{12}\left(s-3\frac{s}{12}\right)}}=d+1+\sqrt{\frac{2s}{3}}.
\end{eqnarray*}
At this point, we have $\lambda-\frac{2m}{n}\leq \lambda-d+1\leq \sqrt{\frac{2s}{3}}+2$.
Now, Nikiforov's blow-up argument (cf. proof of Theorem 8 in \cite{ni}), 
replacing every vertex of $G$ by an independent set of order $t$
and letting $t$ tend to infinity,
implies $\lambda-\frac{2m}{n}\leq \sqrt{\frac{2s}{3}}$,
which completes the proof.
\end{proof}
We believe that the estimate (\ref{e9b}) is the crucial point within the above proof
that is too weak to establish Nikiforov's conjecture.

\end{document}